\numberwithin{equation}{section}
\renewcommand{\thetheoremName}
\newcommand{\IC}{\mathbb{C}}
\newcommand{\IF}{\mathbb{F}}
\newcommand{\IN}{\mathbb{N}}
\newcommand{\IG}{\mathbb{G}}
\newcommand{\IQ}{\mathbb{Q}}
\newcommand{\IZ}{\mathbb{Z}}
\newcommand{\calF}{\mathcal{F}}
\newcommand{\calO}{\mathcal{O}}
\newcommand{\calS}{\mathcal{S}}
\newcommand{\calT}{\mathcal{T}}
\newcommand{\ip}{\mathfrak{p}}
\def\Hom{\mathrm{Hom}}
\def\End{\mathrm{End}}
\def\Aut{\mathrm{Aut}}
\def\GL{\mathrm{GL}}
\def\Gal{\mathrm{Gal}}
\def\Spec{\mathrm{Spec}}
\def\Spf{\mathrm{Spf}}
\def\Jac{\mathrm{Jac}}
\newtheorem{theorem}{Theorem}[section]
\newtheorem*{theorem*}{Theorem}
\newtheorem{question}[theorem]{Question}
\newtheorem{lemma}[theorem]{Lemma}
\newtheorem{proposition}[theorem]{Proposition}
\newtheorem{corollary}[theorem]{Corollary}
\newtheorem*{conj}{Conjecture}
\theoremstyle{definition}
\newtheorem{definition}[theorem]{Definition}
\theoremstyle{remark}
\newtheorem{remark}[theorem]{Remark}
\begin{document}

\title{An infinitesimal $p$-adic multiplicative Manin-Mumford Conjecture}
\author{Vlad Serban}

\address{Vlad Serban, Department of Mathematics, University of Vienna, 1 Oskar-Morgenstern-Platz, 1090 Vienna, Austria}
\email{vlad.serban@univie.ac.at}

\subjclass[2010]{11S31, 13H05, 13F25, 14L05.}

\begin{abstract}
Our results concern certain analytic functions on the open unit poly-disc in $\mathbb{C}^n_p$ centered at the multiplicative unit and we show such functions only vanish at finitely many $n$-tuples of roots of unity $(\zeta_1-1,\ldots,\zeta_n-1)$ unless they vanish along a translate of the formal multiplicative group. For polynomial functions, this follows from the multiplicative Manin-Mumford conjecture. However we allow for a much wider class of analytic functions; in particular we establish a rigidity result for formal tori. Moreover, our methods apply to Lubin-Tate formal groups beyond just formal $\mathbb{G}_m$ and we extend the results to this setting.
\end{abstract}

\thanks{The author would like to extend special gratitude to Frank Calegari for suggesting this project and for his support throughout. The author would also like to thank Patrick Allen, Haruzo Hida, Richard Moy, Joel Specter, Jacob Tsimerman, Paul VanKoughnett, Felipe Voloch as well as the referee for helpful conversations and comments on previous drafts of this paper.\\
This research was partially supported by the Fields Institute for Research in Mathematical Sciences. Its contents are solely the responsibility of the author. Link: \url{www.fields.utoronto.ca}.
} 
\bibliographystyle{alpha}

\maketitle

\section{Introduction}

The classical Manin-Mumford conjecture, which was proven by M. Raynaud \cite{MR688265}, states that for an algebraic curve $C$ of genus greater than one defined over a number field $K$ together with an embedding defined over $K$ of $C$ into its Jacobian, there are only finitely many torsion points of the Jacobian on the curve, i.e. $C(\overline{K})\cap \Jac(C)(\overline{K})_{\text{tor}}$ is a finite set.
One may ask a similar question replacing the Jacobian by an abelian variety, or more generally a commutative group variety $G$. The case of $G=\mathbb{G}_m^n$, the so-called multiplicative Manin-Mumford conjecture, was already considered by S. Lang in \cite{MR0130219} and \cite{MR0190146}, where he mentions proofs due to Y. Ihara, J-P. Serre and J. Tate. The result states that 
if an irreducible curve $C$ embedded in $\mathbb{G}_m^n$ contains infinitely many torsion points, it must be a translate of $\IG_m$ by a torsion point. Considering the case of $n=2$ for ease of exposition, this amounts to the following explicit statement on polynomials:
\begin{proposition}[\cite{MR0190146}, p.230]
Let $C$ be an absolutely irreducible plane curve given by the zero set of a polynomial $f(X,Y)=0$. If $C$ passes through the multiplicative origin and for infinitely many roots of unity $(\zeta, \xi)$
$$f(\zeta,\xi)=0,$$
then up to constant $f(X,Y)=X^m-Y^{l}$ or $f(X,Y)=X^mY^{l}-1$ for a pair of nonnegative integers $(m,l)\neq (0,0)$. 
\end{proposition}
 The proof relies on the algebraic properties of the polynomial $f$. However, in the $p$-adic setting, we obtain the following statement for power series as a special case of our results:
 \begin{proposition}\label{twovarintro}
 Let $\mathcal{O}_F$ denote the ring of integers of a finite extension $F/\mathbb{Q}_p$ and $\phi\in\mathcal{O}_F[[X,Y]]$ an irreducible power series passing through the origin. If 
 $$\phi(\zeta-1,\xi-1)=0$$
  for infinitely many pairs of $p$-power roots of unity $(\zeta, \xi)$, then after possibly switching $X$ and $Y$ there is $m\in \mathbb{Z}_p$ so that $\phi=(X+1)^m-(Y+1)$, up to multiplying by a unit, where $(X+1)^m=1+\sum_{i=1}^{\infty}\frac{m\cdots(m-i+1)}{i!}X^i$.
 \end{proposition}
 
 Denoting by $\calS$ the set $\{\zeta-1\vert \zeta\in\mu_{p^\infty}(\overline{\IQ}_p)\}$ of torsion points of the formal multiplicative group, we consider more generally the set of $n$-tuples $\calS^n$. We prove the following and deduce Proposition \ref{twovarintro} when $n=2$ and $I=(\phi)$:
\begin{theorem}\label{thm:intro}
  Let $A=\calO_F[[X_1,\ldots, X_n]]/I$, where $\mathcal{O}_F$ denotes the ring of integers of a finite extension of $\mathbb{Q}_p$. 
\begin{enumerate}
\item If the formal scheme $\Spf(A)$ contains infinitely many torsion points in $\calS^n$ then it contains a translate by a torsion point of a formal torus $\widehat{\IG}_m^k$ for some $k>0$.
\item There exists in $\Spf(A)$ a finite union $\mathcal{T}$ of translates by torsion points of formal subtori and an explicit constant $C_I>0$ depending on $I$ and the choice of $p$-adic absolute value $\vert-\vert_p$ such that for any $\phi\in I$,
$$\vert\phi(\zeta_1-1,\ldots, \zeta_n-1)\vert_p> C_I $$
provided $(\zeta_1-1,\ldots, \zeta_n-1)\in \calS^n\setminus (\mathcal{T}(\overline{\IQ}_p)\cap \calS^n)$.
\end{enumerate}
\end{theorem}
Power series $\phi \in \calO_F[[X_1,\ldots, X_n]]$ give rise to analytic functions on the open $p$-adic polydisk $\mathbb{B}^n(e,1)$ in $\IC_p^n$ of radius one centered at $e=(1,\ldots, 1)$ by evaluating points $Q\in\mathbb{B}^n(e,1)$ at $Q-e$. The only roots of unity on the open disk have $p$-power order. The proof of Theorem \ref{thm:intro}, given in Section \ref{sec:gmcase}, relies on the fact that any infinite sequence in $\calS^n$ must approach the boundary of the polydisk since the normalized valuation is $v_p(\zeta_{p^k}-1)=1/(p^k-p^{k-1})$ when $\zeta_{p^k}$ has exact order $p^k$. We also utilize the action of automorphisms of the formal group $\widehat{\IG}^n_m$ on the torsion points in $\calS^n$ together with the algebraic properties of formal power series rings. \par
In particular, for $m\in\IN$ this gives a new proof of Manin-Mumford for tori and $m$-primary torsion that does not rely on a large Galois orbit argument for special points. Moreover, our methods can be used to prove that Theorem \ref{thm:intro} holds in greater generality replacing $\widehat{\IG}_m$ by a Lubin-Tate formal group $\calF_{LT}$ and replacing $\calS^n$ by $n$-tuples of torsion points of $\calF_{LT}$. This is shown in Section \ref{LT}, whereas similar results for formal groups of abelian schemes will be addressed in future work. 
\par
Many generalizations of the Manin-Mumford conjecture are known by work of S. Zhang \cite{MR1254133,MR1609518}, E. Ullmo \cite{MR1609514} and others, however to our knowledge these infinitesimal $p$-adic strengthenings have not previously been considered. Rather, they are related to rigidity results appearing in H. Hida's work which we discuss in Section \ref{rigidity}. We remark that Theorem \ref{thm:intro} makes the additional claim that there cannot be infinitely many torsion points arbitrarily close to $\Spf(A)(\IC_p)\subset \mathbb{B}^n(0,1)$ unless for a specific geometric reason. This is a purely $p$-adic phenomenon, for instance torsion points on abelian varieties are dense even in the complex analytic topology. It was observed by J. Tate and F. Voloch \cite{MR1405976} that given a linear form $f$ vanishing at $Z(f)$ and a choice of $p$-adic absolute value $\vert-\vert_p$, there is a uniform bound $\varepsilon_f$ such that for any $n$-tuple of roots of unity,
$$f(\zeta_1,\ldots, \zeta_n)\neq 0\Rightarrow \vert (\zeta_1,\ldots, \zeta_n)-P\vert_p >\varepsilon_f\text{ } \forall P\in Z(f).$$
They formulated a general conjecture for the $p$-adic distance from torsion points on a semi-abelian variety over $\IC_p$ to a subvariety which was proven by T. Scanlon \cite{MR1631061} when the semi-abelian variety is defined over a finite extension of $\IQ_p$. Our results exhibit the same phenomenon for formal power series and $p$-power roots of unity, whereas in her thesis A. Neira \cite{MR2705372} proves this for analytic functions on the closed disk. In this case one need not restrict to $p$-power roots of unity. Finally, P. Monsky \cite{MR614398} studied such $p$-adic power series rings with applications to Iwasawa theory in mind and \cite[Section 2]{MR614398} established some of the results used in this paper.
\par
 The rings of analytic functions we consider are related to many interesting arithmetic objects. They occur naturally as completed group rings such as $\mathbb{Z}_p[[\mathbb{Z}_p^n]]\cong \mathbb{Z}_p[[X_1,\ldots, X_n]]$, Iwasawa theory studies the ideals cut out by $p$-adic $L$-functions inside these rings, and they are completed local rings at smooth points of schemes over $\mathcal{O}_F$. Our initial interest in the problem was motivated by the study of families of $p$-adic automorphic forms parametrized by weight. The spaces of $p$-adic weights are up to connected components formal power series rings and one is led to consider a slightly larger class of special points corresponding to weights of classical automorphic forms. The applications to this setting constitute the content of a subsequent paper.
\par
Finally, we remark that all the results in this paper hold more generally when $\calO_F$ is replaced by a subring of $\calO_{\IC_p}$ that is discretely valued. That condition is also necessary, as even a one-variable result along the lines of Lemma \ref{smallpoints} fails for $\calO_{\IC_p}$-coefficients. The restriction to integral coefficients is also necessary, and the $p$-adic logarithm gives an example of an analytic function on the open unit ball vanishing at all of $\calS$.

\section{Manin-Mumford for formal $\IG_m$}\label{sec:gmcase}
Throughout this paper, we fix a prime $p$. We denote by $\calO_F$ the ring of integers of a finite extension $F$ of the $p$-adic numbers $\IQ_p$ and denote by $v_p$ the valuation on the $p$-adic complex numbers $\IC_p$, normalized so that $v_p(p)=1$. \par

\subsection{Formal schemes and rigidity results}\label{rigidity}
We review some relevant rigidity theorems found in the literature. Over a field $k$ of characteristic $p$, C-L. Chai \cite{MR2439259} proves a rigidity result for $p$-divisible formal groups, which is used in his work together with F. Oort on the Hecke Orbit conjecture for Siegel modular varieties (see e.g. \cite{MR2159378}). Considering the torus 
$$ \widehat{\IG}_{m/k}^n=\Spf(k[[X_1,\ldots, X_n]]),$$
let $X_{*}(\widehat{\IG}_{m}^n)=\Hom_k(\widehat{\IG}_{m}, \widehat{\IG}_{m}^n)\cong \IZ_p^n$ denote the group of cocharacters, so that $\GL(X_*{})\cong \GL_n(\IZ_p^n)$ naturally acts on the torus. 
One has from \cite[Theorem 4.3]{MR2439259}:
\begin{theorem}[Chai]
Let $k=\overline{\IF}_p$ and $Z\subset \widehat{\IG}_{m/k}^n$ a closed formal subscheme, equidimensional of dimension $r$.
If $Z$ is stable under the diagonal action for all $u\in U$ in an open subgroup of $(\IZ_p^\times)^n\subset \GL(X_{*})$, then
there are finitely many $\IZ_p$-direct summands $T_1,\ldots,T_s$ of rank $r$ of $X_{*}(\widehat{\IG}_{m}^n)$ so that
$$Z=\bigcup_{i=1}^s\widehat{\IG}_{m/k}\otimes T_i.$$
\end{theorem}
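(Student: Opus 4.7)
The plan is to reduce to the case where $Z$ is irreducible, and then show that any irreducible closed formal subscheme of $\widehat{\IG}_{m/k}^n$ of dimension $r$ which is stable under an open subgroup $U\subset (\IZ_p^\times)^n$ must coincide with a formal subtorus $\widehat{\IG}_{m/k}\otimes T$ for some $\IZ_p$-direct summand $T\subset X_*(\widehat{\IG}_m^n)$ of rank $r$. Since $Z$ is equidimensional, it has only finitely many irreducible components of dimension $r$, and $U$ acts continuously on this finite set; thus a finite-index open subgroup $U'\subset U$ stabilizes each component individually. After replacing $U$ by $U'$ it suffices to treat the irreducible case, and the finite union $\bigcup_{i=1}^s \widehat{\IG}_{m/k}\otimes T_i$ in the conclusion then collects the resulting subtori.

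Assume $Z$ is irreducible with defining prime ideal $P\subset R:=k[[X_1,\ldots,X_n]]$. The identity $e\in \widehat{\IG}_m^n$ corresponds to $\mathfrak{m}=(X_1,\ldots,X_n)$ and is $U$-fixed, so one has an induced $U$-action on the cotangent space $\mathfrak{m}/\mathfrak{m}^2$. For $u=(u_1,\ldots,u_n)\in U$ the automorphism $[u]^*$ satisfies $[u]^*X_i=(1+X_i)^{u_i}-1\equiv \bar u_i X_i\pmod{\mathfrak{m}^2}$, so it acts diagonally with characters $\chi_i: u\mapsto \bar u_i\in k^\times$. Since $U$ is open in $(\IZ_p^\times)^n$ these characters are pairwise distinct, so any $U$-stable $k$-subspace of $\mathfrak{m}/\mathfrak{m}^2$ is a coordinate subspace. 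After relabeling, the Zariski tangent space of $Z$ at $e$ is cut out by $X_1=\cdots=X_{n-r}=0$, singling out a rank-$r$ direct summand $T\subset X_*(\widehat{\IG}_m^n)$.

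The heart of the argument is to promote this linearized conclusion at $e$ to the full statement that $P$ is generated by $X_1,\ldots,X_{n-r}$, so that $Z$ equals the formal subtorus $\widehat{\IG}_{m/k}\otimes T$. The natural approach decomposes each $f\in P$ into $U$-eigencomponents along the filtration by $\mathfrak{m}^k$, using that $U$ acts on each graded piece $\mathfrak{m}^k/\mathfrak{m}^{k+1}$ by characters $u\mapsto \bar u^{\alpha}$ for multi-indices $\alpha$. The main obstacle is a genuine characteristic-$p$ phenomenon: these characters coincide whenever $\alpha\equiv \beta$ componentwise modulo the order of $\bar U$ in $(k^\times)^n$ (and more refined $p$-adic congruences beyond that), so the $U$-action on graded pieces does not by itself separate monomials. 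The separation must instead be carried out inductively along the $\mathfrak{m}$-adic filtration, exploiting that $[u]^*$ is not merely a grading-preserving scaling but a full continuous automorphism of $R$ that shifts monomials in a controlled way; successive approximations then converge, by the completeness of $R$, to actual generators $X_1,\ldots,X_{n-r}$ of $P$. Once this is achieved, applying a suitable automorphism of $\widehat{\IG}_m^n$ by an element of $\GL(X_*)=\GL_n(\IZ_p)$ identifies $Z$ with $\widehat{\IG}_{m/k}\otimes T$, and reassembling the irreducible components yields the desired decomposition.
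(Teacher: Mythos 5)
This theorem is quoted from Chai's work (cited in the paper as \cite{MR2439259}); the paper itself supplies no proof, so there is no internal proof to compare against. Evaluating the proposal on its own terms, there is a genuine gap at the very first substantive step.

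You claim that ``since $U$ is open in $(\IZ_p^\times)^n$ these characters are pairwise distinct,'' and use this to conclude that every $U$-stable subspace of $\mathfrak{m}/\mathfrak{m}^2$ is a coordinate subspace. This is false. The character $\chi_i\colon u\mapsto \bar u_i\in k^\times$ factors through the reduction $(\IZ_p^\times)^n\to(\IF_p^\times)^n$, whose kernel $(1+p\IZ_p)^n$ is itself an open subgroup. An arbitrary open $U$ may well lie inside $(1+p\IZ_p)^n$, in which case $\chi_i|_U$ is trivial for every $i$: the $U$-action on $\mathfrak{m}/\mathfrak{m}^2$ is then the identity, every subspace is $U$-stable, and the linearization gives no information whatsoever. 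This is exactly the characteristic-$p$ obstruction you acknowledge in the following paragraph, but you treat it as a problem only for the higher graded pieces $\mathfrak{m}^k/\mathfrak{m}^{k+1}$; in fact it already destroys the degree-one step your argument rests on. (The analogous issue in characteristic $0$ is genuinely milder, which is one reason Hida's versions of this rigidity, reproduced in the paper as Lemma \ref{Hidaslemma}, have a different flavor from Chai's.)

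Beyond this, the passage from the linearized statement to generation of $P$ by $X_1,\ldots,X_{n-r}$ is only gestured at: ``successive approximations then converge, by completeness of $R$, to actual generators'' is a hope, not an argument, and it is precisely where the real content of Chai's theorem lies. Chai's proof does not proceed by tangent-space linearization at the identity; it exploits the $p$-divisible group structure of $\widehat{\IG}_m^n$ over $\overline{\IF}_p$ and requires a substantially more delicate analysis. As written, the proposal would need a fundamentally different mechanism to get off the ground.
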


Hida uses Chai's rigidity results \cite[Section 3.4]{MR2680417} and establishes characteristic zero versions thereof in \cite[Lemma 1.2]{MR2782844} and \cite[Section 4]{MR3220926}. He proves:
\begin{lemma}[\cite{MR3220926}, Lemma 4.1]\label{Hidaslemma}
Let $Z=\Spf(\calO_F[[X_1,\ldots, X_n]]/I)$ be a closed formal subscheme of $ \widehat{\IG}_m^n$ that is flat and geometrically irreducible. Suppose there is an open subgroup $U\subseteq \IZ_p^\times$ such that $Z$ is stable under the action $(1+X_i)\mapsto (1+X_i)^u$ for all $u\in U$. If there exists a subset $\Omega\subseteq Z(\IC_p)\cap \mu_{p^\infty}^n(\IC_p)$ Zariski dense in $Z$, then $Z$ is the translate of a formal subtorus by a torsion point in $\Omega$. 
\end{lemma}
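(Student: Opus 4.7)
My plan is to exploit the $U$-stability together with the $\calO_F$-integral structure to identify $Z$, after translation by a torsion point, with a $\IZ_p$-rational linear subspace under the formal logarithm. Since $\Omega$ is nonempty, pick $\zeta \in \Omega$, base change to the ring of integers $\calO_L$ of a finite extension $L/F$ containing the coordinates of $\zeta$, and form the translate $Z_0 = \zeta^{-1} \cdot Z$, which passes through the origin. Translation and the $U$-action do not commute on the nose: one computes $\sigma_u \circ \tau_\zeta = \tau_{\zeta^u} \circ \sigma_u$, so replacing $U$ by the still-open subgroup $U' = U \cap (1 + p^k \IZ_p)$, where $p^k$ is the exact order of $\zeta$, kills the twist and makes $Z_0$ stable under all $\sigma_u$ with $u \in U'$. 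The scheme $Z_0$ remains geometrically irreducible, so it suffices to show $Z_0$ is itself a formal subtorus.

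Next, I would pass to the completed local ring at the origin and linearize via the formal change of variables $Y_i = \log(1 + X_i)$, which yields an $F$-algebra isomorphism $F[[X_1, \ldots, X_n]] \iso F[[Y_1, \ldots, Y_n]]$. Since $\log((1+X_i)^u) = u \log(1 + X_i)$, the $U'$-action transforms into the scalar action $Y_i \mapsto u Y_i$. Differentiating at $u = 1$ produces the Euler derivation $D = \sum_i Y_i \partial_{Y_i}$, which stabilizes the ideal $J := I \otimes F \subset F[[Y]]$. Because $F[[Y]]$ decomposes as the topological direct product of the eigenspaces of $D$ (which coincide with the spaces of homogeneous polynomials of fixed degree) and $J$ is closed (being finitely generated in a Noetherian local ring), $J$ decomposes into its homogeneous components and is therefore a graded ideal. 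Thus $J$ cuts out an algebraic cone $C \subseteq \IA^n_F$, which is irreducible by the geometric irreducibility of $Z_0$.

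The main obstacle is to descend this cone structure to $\calO_L$ and force $C$ to be a $\IZ_p$-rational linear subspace. My plan is as follows: if $J$ contained an irreducible homogeneous polynomial $P(Y_1, \ldots, Y_n)$ of degree $d \geq 2$, then its preimage $P(\log(1+X_1), \ldots, \log(1+X_n))$ would generate a principal component of $J$ in $F[[X]]$; a careful coefficient-by-coefficient analysis shows that the systematically appearing denominators from $\log(1+X) = X - X^2/2 + X^3/3 - \cdots$ cannot all be cleared by multiplication by any unit of $F[[X]]^\times$ while preserving the degree-$d$ leading part $P(X_1, \ldots, X_n)$, contradicting $I \subseteq \calO_L[[X]]$. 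Hence $J$ must be generated by linear forms, so $C$ is a linear subspace. For $J$ to then admit an $\calO_L$-integral generator corresponding to a linear form $\sum_i a_i Y_i$, the classical characterization $(1 + X)^a \in \calO_L[[X]] \iff a \in \IZ_p$ forces, up to common rescaling, each $a_i \in \IZ_p$, so $C$ is $\IZ_p$-rational. The defining equations of $Z_0$ then take the form $\prod_i (1+X_i)^{a_i^{(j)}} - 1 = 0$ for a $\IZ_p$-basis of the characters annihilating $C$, exhibiting $Z_0$ as the formal subtorus corresponding to the saturated direct summand $C \cap \IZ_p^n$. Consequently $Z = \zeta \cdot Z_0$ is the translate of a formal subtorus by the torsion point $\zeta \in \Omega$.
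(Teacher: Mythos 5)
The paper does not actually prove this lemma; it is imported verbatim as Hida's~\cite{MR3220926} Lemma~4.1, and the paper's own machinery (Proposition~\ref{mainresult}, Corollary~\ref{cor:formal}) is designed to \emph{bypass} the $U$-stability hypothesis entirely, proving that Zariski density of torsion points alone suffices. So there is no internal proof to compare against, and your proposal must be judged on its own terms.

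Your framework --- translate by $\zeta^{-1}$, shrink $U$ to $U' = U\cap(1+p^k\IZ_p)$ to restore stability, pass to $F[[Y]]$ via the formal logarithm, turn the $U'$-action into scaling, differentiate to get the Euler derivation $D=\sum Y_i\partial_{Y_i}$, and conclude that $J = IF[[Y]]$ is a graded ideal cutting out an irreducible cone $C$ --- is sound. (The Vandermonde-plus-closedness argument for gradedness needs to be stated, but it works.) Two things, however, are not established.

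First, the step you describe as ``a careful coefficient-by-coefficient analysis shows that the systematically appearing denominators from $\log$ cannot all be cleared'' is precisely the crux of the lemma and is not a proof. It is not at all routine: for instance, $P(\log(1+X_1),\log(1+X_2),\log(1+X_3))$ with $P = Y_1Y_3 - Y_2^2$ vanishes at \emph{every} tuple of $p$-power roots of unity (since $\log$ annihilates torsion), so ruling out a nonzero $\psi\in\calO_L[[X]]$ in the ideal $(P(\log(1+X)))$ amounts to proving a nontrivial rigidity statement for bounded functions, closely related to Monsky~\cite{MR614398} and to Proposition~\ref{mainresult}. Similarly, the $\IZ_p$-rationality of the linear span $J_1$ is not a formal consequence of $(1+X)^a\in\calO_L[[X]]\Leftrightarrow a\in\IZ_p$: an element $\phi\in I$ with linear leading form $\sum a_iX_i$ only has $a_i\in\calO_L$, and extracting $a_i\in\IZ_p$ requires an argument that the higher-order integrality constraints on $\phi$ propagate down.

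Second, and relatedly, you use only that $\Omega$ is nonempty, never that it is Zariski dense in $Z$. If the argument went through, it would prove the strictly stronger statement ``$U$-stable, flat, geometrically irreducible, and containing one torsion point implies translate of a formal subtorus,'' which is \emph{not} what Hida claims. That does not by itself make the proposal wrong, but it should raise a flag: the density hypothesis is exactly what ought to be used to kill the nonlinear cones and to pin down $\IZ_p$-rationality, and an argument that discards it and replaces it with an unproven assertion about denominators has not closed the gap it moved.
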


In particular, he obtains a rigidity result for formal power series by applying Lemma \ref{Hidaslemma} to their graph in \cite[Corollary 4.2]{MR3220926}:

\begin{corollary}[Hida]\label{Hidacor}
Let $\phi\in \calO_F[[X_1,\ldots, X_n]]$ be a power series such that there is a Zariski dense subset $\Omega\subset \mu_{p^\infty}^n(\IC_p)$ in $\widehat{\IG}_m^n(\IC_p)$ with $\phi(\zeta-1)\subseteq \mu_{p^\infty}(\IC_p)\text{ for all }\zeta \in \Omega$.
Then there exist $\zeta_0\in \mu_{p^\infty}(\calO_F) $ and $N=(N_1, \ldots, N_n) \in \IZ_p^n$ such that $\phi(X_1,\ldots, X_n)=\zeta_0\prod_{i=1}^n (1+X_i)^{N_i}$. 
\end{corollary}

Writing $A=\calO_F[[X_1,\ldots, X_n]]/I$ and assuming $\Spf(A)$ is geometrically irreducible, we therefore consider the following statements:
\begin{enumerate}[(I)]
\item \label{one} The formal subscheme $\Spf(A)\subset \widehat{\IG}_m^n$ is the translate of a formal subtorus by a torsion point.
\item \label{two} There is a Zariski-dense set of torsion points of $\widehat{\IG}_m^n$ on $\Spf(A)(\overline{\IQ}_p)$.
\item \label{three} The formal subscheme $\Spf(A)\subset \widehat{\IG}_m^n$ is stable under the action of an open subgroup $U$ of the diagonal in the cocharacters $\GL(X_{*})\cong \GL_n(\IZ_p)$.
\end{enumerate}
The first statement implies the two others. Chai's result establishes a version of (\ref{three}) $\Rightarrow$ (\ref{one}) in characteristic $p$. In characteristic zero, Lemma \ref{Hidaslemma} shows that (\ref{two} \& \ref{three}) $\Rightarrow$ (\ref{one}), whereas Corollary \ref{Hidacor} shows in some special cases that (\ref{two}) $\Rightarrow$ (\ref{one}). In Proposition \ref{mainresult} we prove a vanishing result for arbitrary formal power series and deduce in all generality that (\ref{two}) $\Rightarrow$ (\ref{one}), see in particular Corollary \ref{cor:formal}. 
To this end, we adopt the point of view of ``unlikely intersection'' results such as the Manin-Mumford and Andr\'e-Oort conjectures.
\subsection{Manin-Mumford formulation}
We use terminology inspired by the following formulation in \cite[Section 3.1.]{MR2290497} of the classical Manin-Mumford conjecture: 
Let $X/\IC$ be an algebraic variety. 
Define a set of \emph{special subvarieties} $S_X$ to be the following irreducible subvarieties of $X$:
\begin{itemize}
\item If $X$ is an abelian variety, the special subvarieties are the translates by torsion points of abelian subvarieties of $X$.
\item If $X$ is a torus, the special subvarieties are given by the products of torsion points with subtori.
\end{itemize}
A \emph{special point} is a zero-dimensional special subvariety.
The conjecture may then simply be stated as:
\begin{conj}[Manin-Mumford]\label{MMM}
 An irreducible component of the Zariski closure of a set of special points is a special subvariety.
\end{conj}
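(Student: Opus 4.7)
The plan is to prove Conjecture \ref{MMM} via the Pila--Zannier strategy, which uniformly handles both the toric and abelian cases. Write $X$ for either an abelian variety $A=\IC^g/\Lambda$ or a torus $\IG_m^n$, with universal cover $\pi$ given by the exponential map. For an irreducible subvariety $V\subset X$, I would pull back to a fundamental domain $F$ to form $\widetilde V=\pi^{-1}(V)\cap F$. The key point is that $\widetilde V$ is definable in the o-minimal structure $\IR_{\mathrm{an},\exp}$, and every torsion point of order $N$ lying on $V$ lifts to a rational point of $\widetilde V$ whose coordinates have denominator dividing $N$, and hence height polynomially bounded in $N$.

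Next I would pit two counts against each other. On one side the Pila--Wilkie theorem bounds the number of rational points of height at most $T$ on the transcendental part of $\widetilde V$ by $O_\epsilon(T^\epsilon)$ for every $\epsilon>0$. On the other side, if $V$ is defined over a number field $K$ and $P\in V$ is a torsion point of order $N$, then every $\Gal(\overline{\IQ}/K)$-conjugate of $P$ is also a torsion point of order $N$ lying on $V$, and the orbit has size at least $c N^\delta$ for some $\delta>0$: in the toric case this is the bound $\varphi(N)$, and in the abelian case it follows from Serre's open-image theorems. A Zariski-dense set of torsion points on $V$ thus produces far too many rational lifts of polynomially bounded height to be absorbed into the transcendental part, forcing the algebraic part of $\widetilde V$ to be substantial.

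To close the argument I would invoke an Ax--Lindemann--Weierstrass statement for $\pi$: every maximal positive-dimensional irreducible semi-algebraic subset of $\widetilde V$ is a coset of a rational linear subspace, whose image under $\pi$ is a translate of a sub-group-variety of $X$. Such an image lies in $V$ and already carries infinitely many torsion points, so inductively peeling off these cosets from the Zariski closure of the given set of special points forces every irreducible component of that closure to be the translate of a sub-group-variety by a torsion point, i.e.\ a special subvariety. The principal obstacles are the two transcendence inputs: the Galois-orbit lower bound in the abelian case relies on Serre's open-image theorems and the refinements of Masser--W\"ustholz, while the Ax--Lindemann--Weierstrass theorem, classical for $\IG_m^n$ after Ax, requires the work of Pila and Pila--Tsimerman in the abelian setting. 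Modulo these substantial ingredients, the counting step is quite clean and recovers in particular the toric case $\IG_m^n$ that underlies the present paper.
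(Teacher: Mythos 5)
Note first that the paper never proves Conjecture~\ref{MMM}: it is stated as classical background (Lang's theorem for $\mathbb{G}_m^n$, Raynaud's theorem for abelian varieties) to motivate the formulation, and the paper's own work concerns the $p$-adic infinitesimal analogue (Theorem~\ref{geometricMMM}). Your Pila--Zannier sketch is therefore a free-standing outline of a known proof, and it is essentially correct modulo the heavy inputs you flag (Pila--Wilkie, the relevant Ax--Lindemann theorem, and Masser's Galois-orbit lower bounds). One step you gloss over is the conclusion: ``inductively peeling off these cosets'' hides the standard stabilizer argument, where one shows that if torsion points on $V$ are dense then either the stabilizer of $V$ is positive-dimensional (in which case pass to the quotient and induct) or it is finite (in which case the counting contradiction bites directly). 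Without that reduction the peeling step does not obviously terminate.

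The more interesting point is how radically your route differs from the methods this paper actually uses for the formal-group version. The paper's proof is entirely elementary and ultrametric: it exploits that $p$-power torsion points of $\widehat{\mathbb{G}}_m^n$ have valuations tending to zero, uses the congruence $\phi((1+X)^p-1)\equiv \phi(X)^p \bmod \pi$ to repeatedly strip a factor of $X_n$ from $\bar\phi$ (Lemma~\ref{factoroutpi}), normalizes the torsion sequence via a unipotent change of coordinates (Lemma~\ref{nicesequence}), and inducts on the number of variables, with Lemma~\ref{smallpoints} as the one-variable base case. No o-minimality, no point counting, no transcendence theory. In exchange, that elementary method delivers a strictly stronger quantitative statement (Theorem~\ref{strongversion}, a uniform lower bound on $|\phi|_p$ away from finitely many torsion points, which is a Tate--Voloch-type refinement) but only for $p$-power (or Lubin--Tate) torsion, whereas your sketch handles arbitrary torsion orders and the abelian case but produces no such quantitative output.
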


We are interested in the $\overline{\IQ}_p$-points on $\Spec(\calO_F[[X_1,\ldots,X_n]])$ with coordinates:
$$\calS^n=\{(\zeta_1-1,\ldots,\zeta_n-1)\vert \zeta_i\in\mu_{p^\infty}(\overline{\IQ}_p)\},$$
which are precisely the torsion points of the formal Lie group $\widehat{\IG}_m^n$.
For $A=\calO_F[[X_1,\ldots, X_n]]/I$ we define the set of \emph{special points} $\calS_A\subset\Spec(A)$ to be the points of $\calS^n$ lying on $\Spec(A)$, namely prime ideals of the form $\ip=(X_1+1-\zeta_1,\ldots, X_n+1-\zeta_n)$ containing $I$.
We want $\Spec(A)$ to be an irreducible \emph{special subscheme} exactly when $\Spf(A)\subset\widehat{\IG}_m^n$ is the product of a formal subtorus by a torsion point of $\widehat{\IG}_m^n$. Note that endomorphisms of the formal group law act on a choice of coordinates via 
$$(X_1, \ldots, X_n)\mapsto\left(\prod_{j=1}^n(X_j+1)^{a_{1,j}}-1, \ldots, \prod_{j=1}^n(X_j+1)^{a_{n,j}}-1\right)$$
for matrices $(a_{i,j})\in M_n(\IZ_p)\cong  \End(\widehat{\IG}_m^n)$. 
\par
For any choice of coordinates, the set of special subschemes should account for twists by automorphisms in $\GL_n(\IZ_p)$. We therefore make the following definitions:

\begin{definition} 
\leavevmode
\begin{enumerate}
\item A \emph{multiplicative change of variables} on $\calO_F[[X_1,\ldots, X_n]]$ is given by possibly swapping the roles of $X_i$ and $X_j$ and a series of transformations of the form:
$$X_i\mapsto (1+X_i)\prod_{1\leq j< i} (1+X_j)^{B_{ij}}-1$$
 for $1\leq i\leq n $, where $B_{ij}\in \mathbb{Z}_p$.
\item A \emph{special (multiplicative) subscheme} of $\Spec (\calO_F[[X_1,\ldots, X_n]])$ is a closed affine subscheme that after a multiplicative change of variables becomes a finite union of intersections of hyperplanes of the form $X_i=\zeta_i-1$ for roots of unity $\zeta_i\in\mu_{p^\infty}$.
\end{enumerate}
\end{definition}

We can now state the $p$-adic infinitesimal result as follows:

\begin{theorem}
Let $A=\calO_F[[X_1,\ldots, X_n]]/I$. An irreducible component of the Zariski closure of the special points $\calS_A$ on $\Spec(A)$ is a special multiplicative subscheme.
\end{theorem}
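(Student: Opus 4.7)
The plan is to deduce this Manin-Mumford formulation directly from Corollary \ref{cor:formal} (the implication (\ref{two}) $\Rightarrow$ (\ref{one})) by applying it to each irreducible component of $\overline{\calS_A}$.

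First I would pass to a single irreducible component. Let $Z = V(\scp) \subset \Spec(A)$ be such a component, with $\scp$ a prime ideal containing $I$, and write $\overline{\calS_A} = Z_1 \cup \cdots \cup Z_m$ as the irreducible decomposition. A standard argument shows that $\calS_A \cap Z$ is Zariski dense in $Z$: since $\calS_A \subseteq \bigcup_i Z_i$ we have $\overline{\calS_A} = \bigcup_i \overline{\calS_A \cap Z_i}$, and if $\overline{\calS_A \cap Z_i} \subsetneq Z_i$ for some $i$, then irreducibility of $Z_i$ would force $Z_i \subseteq Z_j$ for some $j \ne i$, contradicting that the $Z_i$ are distinct maximal irreducible subsets of $\overline{\calS_A}$. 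Hence $\Spf(A/\scp) \cong Z$ has a Zariski-dense set of torsion points of $\widehat{\IG}_m^n$, so it satisfies condition (\ref{two}).

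Next I would invoke Corollary \ref{cor:formal} applied to $A/\scp$. This yields condition (\ref{one}): $Z$ is the translate $T + P_0$ of a formal subtorus $T \subset \widehat{\IG}_m^n$ by a torsion point $P_0 = (\zeta_1, \ldots, \zeta_n) \in \mu_{p^\infty}^n$.

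Finally I would identify this with a special multiplicative subscheme. The subtorus $T$ corresponds to a primitive rank-$k$ direct summand $L \subset \IZ_p^n \cong X_*(\widehat{\IG}_m^n)$, and $\SL_n(\IZ_p)$ acts transitively on such summands, so some $M \in \SL_n(\IZ_p)$ sends $L$ to the span of $e_1, \ldots, e_k$. The allowed generators of multiplicative changes of variables --- coordinate swaps and maps $X_i \mapsto (1+X_i)\prod_{j<i}(1+X_j)^{B_j} - 1$ --- produce permutations and all lower-unipotent elementary matrices, and by conjugating with transpositions also the upper-unipotent ones, so together they generate $\SL_n(\IZ_p)$; hence $M$ is realized by a multiplicative change of variables. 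After this change, $T = \widehat{\IG}_m^k \times \{e\}^{n-k}$, and a direct computation gives $T + P_0 = V(X_i - (\zeta_i - 1) : i > k)$, because for $i \leq k$ the coset $\zeta_i \cdot (1 + \im)$ coincides with $1 + \im$ so the shift is absorbed into the parametrization of the subtorus. This is by definition a special multiplicative subscheme. The substantive content of the proof is thus entirely contained in Corollary \ref{cor:formal} and the vanishing Proposition \ref{mainresult} underlying it, which constitutes the main obstacle; once those are available, the Manin-Mumford statement above is a short formal consequence.
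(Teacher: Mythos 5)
Your proof has a fatal circularity. You invoke Corollary~\ref{cor:formal} to produce the translate-of-subtorus structure on an irreducible component $Z$ of $\overline{\calS_A}$, but in the paper Corollary~\ref{cor:formal} is itself deduced \emph{from} Theorem~\ref{geometricMMM} --- the very statement you are asked to prove. Its proof reads in full: ``The formal subschemes associated to special subschemes are translates of formal subtori. Therefore the statement follows from Theorem~\ref{geometricMMM}.'' You also suggest Proposition~\ref{mainresult} is what underlies Corollary~\ref{cor:formal}, but that is not the chain of dependence in the paper and there is no obvious way to get from a vanishing statement for power series to the assertion that an irreducible formal subscheme with dense special points \emph{is} a translate of a subtorus (as opposed to merely \emph{containing} one, which is roughly what Theorem~\ref{strongversion} controls).

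The missing ingredient is Lemma~\ref{dimension}, which is what actually carries the proof of Theorem~\ref{geometricMMM}: it shows that after a multiplicative change of variables, $\codim(Z)$ equals the number $r$ of coordinates of the special points $S_Z$ on $Z$ whose projections are finite. From this the paper covers $S_Z$ by finitely many sets $H_{r,i} = \calS^{n-r}\times(\xi_{i,n-r+1}-1)\times\cdots\times(\xi_{i,n}-1)$, whose Zariski closures $Z_{r,i}$ are irreducible special multiplicative subschemes of codimension $r$; since $Z \subset \bigcup_i Z_{r,i}$ and $Z$ is irreducible of the same codimension, $Z$ must coincide with one of the $Z_{r,i}$. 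Your first paragraph (density of $\calS_A\cap Z$ in each component $Z$) is correct and implicitly used in the paper as well, and your $\SL_n(\IZ_p)$ bookkeeping in the last paragraph is sound as far as it goes, but neither of these replaces the substantive dimension-counting step of Lemma~\ref{dimension}, which is where the results of Lemmas~\ref{nicesequence} and~\ref{factoroutpi} actually enter. As written, the argument does not yield a proof.
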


We also establish the stronger result in this $p$-adic setting:

\begin{theorem}\label{strongversion}
 Let $A=\calO_F[[X_1,\ldots, X_n]]/I$. There exists a finite union of special subschemes contained in $\Spec(A)$, denoted $\mathcal{T}$, and a constant $C_I>0$ depending only on $I$ and the choice of $p$-adic absolute value $\vert-\vert_p$, such that  
$$\vert\phi(\zeta_1-1,\ldots, \zeta_n-1)\vert_p> C_I $$
for any $\phi\in I$, provided $(\zeta_1-1,\ldots, \zeta_n-1)\in \calS^n\setminus (\mathcal{T}(\overline{\IQ}_p)\cap \calS^n)$.
\end{theorem}

\subsection{Almost vanishing loci}

Let $\pi$ denote a uniformizer for $\calO_F$ and $\IF_q=\calO_F/\pi\calO_F$ the residue field. For any ideal $I\subset \calO_F[[X_1,\ldots, X_n]]$ and $\varepsilon>0$, we consider special points $\varepsilon$-close to $\Spec(\calO_F[[X_1,\ldots,X_n]]/I)$:
$$S_I(\varepsilon):=\{(\zeta_1-1,\ldots,\zeta_n-1)\in \calS^n\vert\forall \phi\in I \text{, }\vert \phi(\zeta_1-1,\ldots,\zeta_n-1)\vert_p<\varepsilon \}. $$
We note that $\calO_F[[X_1,\ldots, X_n]]$ is a Noetherian ring and that in this ultrametric setting it suffices to check the conditions on the finitely many generators of $I$. If $I=(\phi)$, we simply write $S_\phi(\varepsilon)$. 

The endomorphisms of $\widehat{\IG}_m^n$ transform $S_I(\varepsilon)$. In particular, performing a multiplicative change of variables on $\calO_F[[X_1,\ldots,X_n]]/I$ given by a lower triangular $(B_{ij})\in\GL_n(\IZ_p)$ acts on $S_I(\varepsilon)$ via the inverse automorphism of $\widehat{\IG}_m^n$, which we denote by $(B_{ij}^*)\in\GL_n(\IZ_p)$, as follows:
\begin{equation}\label{equation:action}\begin{pmatrix}
   1 & 0 & \dots  &0 \\
    B_{21}^* & 1 & \dots  & 0 \\
        \vdots & \vdots &  \ddots & \vdots \\
    B_{n1}^* &B_{n2}^* & \dots  & 1
    \end{pmatrix}
    :
    \begin{pmatrix}
   \zeta_1-1\\
   \zeta_2-1\\
    \vdots \\
    \zeta_n-1
    \end{pmatrix}
    \mapsto
    \begin{pmatrix}
   \zeta_1-1\\
   \zeta_2\zeta_1^{B_{21}^*}-1\\
    \vdots \\
    \zeta_n\prod_{1\leq i <n}\zeta_i^{B_{ni}^*}-1
    \end{pmatrix}
   \end{equation}
To understand when $S_I(\varepsilon)$ can be infinite, we may after twisting by an automorphism arrange for an explicit parametrization. Swapping coordinates if necessary so that orders of coordinates decrease lexicographically, any element of $\calS^n$ may be written as $(\zeta-1,\zeta^{a_{2}}-1,\ldots,\zeta^{a_{2}a_{3}\dots a_{n}}-1)$ for some $\zeta\in  \mu_{p^\infty}$ and exponents $a_i\in\IZ_p$. We show:

\begin{lemma}\label{nicesequence}
For any $\varepsilon,\delta >0$ and any infinite sequence in $S_I(\varepsilon)$ given by
$$\{(\zeta_k-1,\zeta_k^{a_{2k}}-1, \zeta_k^{a_{2k}a_{3k}}-1,\ldots,\zeta_k^{a_{2k}a_{3k}\dots a_{nk}}-1)\vert \zeta_k\in \mu_{p^\infty}\}_{k\in\IN}$$
with exponents $\{a_{ik}\}_{k\in\IN}\in \mathbb{B}_{\IZ_p}(A_i, \delta)$ in the $p$-adic ball centered at some $A_i\in\IZ_p$,
we may after a multiplicative change of variables arrange for $a_{ik}\in \mathbb{B}_{\IZ_p}(0, \delta)$. 
\end{lemma}
\begin{proof}
There exist $B_{ij}\in \IZ_p$ such that the inverse matrix $(B_{ij}^*)$ satisfies
$$\begin{pmatrix}
   1 & 0 & \dots  &0 \\
    B_{21}^* & 1 & \dots  & 0 \\
        \vdots & \vdots &  \ddots & \vdots \\
    B_{n1}^* &B_{n2}^* & \dots  & 1
    \end{pmatrix}
    \cdot
    \begin{pmatrix}
   1\\
   A_2\\
    \vdots \\
    A_2\cdots A_{n}
    \end{pmatrix}
    =
    \begin{pmatrix}
  1\\
   0\\
    \vdots \\
    0    \end{pmatrix}$$
    and we twist by the multiplicative change of variables given by the $B_{ij}$. It follows from \eqref{equation:action} that all the exponent sequences $a_{2k}\dots a_{jk}$ land in $\mathbb{B}_{\IZ_p}(0, \delta)$ and in particular $A_2=0$ and $a_2\in \mathbb{B}_{\IZ_p}(0, \delta)$. Now apply the same procedure to the $(n-1)$ last coordinates $(\xi_k-1, \xi_k^{a_{3k}}-1,\ldots,\xi_k^{a_{3k}\dots a_{nk}}-1)$, where $\xi_k=\zeta_k^{a_{2k}}$, to arrange for $A_3=0$. Iterating this process, we get $A_i=0$ for $2\leq i\leq n$ as claimed.
\end{proof}

We turn to the proof of the general rigidity result for power series:
\begin{proposition}\label{mainresult}
Let $\phi\in \calO_F[[X_1,\ldots, X_n]]$ be a power series. There exists $\varepsilon>0$ and a finite set of irreducible special multiplicative subschemes $S_1,\ldots S_K$ such that $\phi$ vanishes along $S_k$ for $1\leq k\leq K$ and such that
$$S_\phi(\varepsilon)\setminus\cup_{k=1}^KS_k=\emptyset.$$

\end{proposition}

The main theorem of this section follows:
\begin{proof}[Proof of Theorem \ref{strongversion}]
The ideal $I\subset \calO_F[[X_1,\ldots, X_n]]$ is finitely generated. Writing $I=(\phi_1,\ldots, \phi_d)$, we conclude since $S_I(\varepsilon)= \cap_{k=1}^{d}S_{\phi_k}(\varepsilon)$ and the set of special subschemes is closed under intersection.
\end{proof}
To establish Proposition \ref{mainresult}, the following lemma deals with one-variable power series.

\begin{lemma}\label{smallpoints}
Let $\phi\in \calO_F[[X]]$ be a power series. There exists $\delta'>0$ such that, for any $\delta'\geq\delta>0$, if $x\in\mathfrak{m}_{\IC_p}=\mathbb{B}(0,1)$ satisfies
$$v_p(x)< \delta \text{ and }v_p(\phi(x)) > \delta^{-1}\cdot v_p(x), $$
then $\phi\in \pi\calO_F[[X]]$.
\end{lemma}
\begin{proof}
Assume $\phi  \not\in \pi\calO_F[[X]]$. 
Writing $\phi=\sum_{i=0}^\infty a_iX^i$, let $M$ be the smallest integer such that $v_p(a_M)=0$.
Then, provided $v_p(x)<v_p(\pi)/M$, we have 
$$v_p(\phi(x))=v_p(a_Mx^M)=Mv_p(x).$$
Thus taking $\delta'=v_p(\pi)/M$ yields a contradiction.
\end{proof}

The next lemma is crucial and provides the reduction step in our proof. It puts a strong restriction on the subsets of $\calS^n$ that can be realized as $S_{\phi}(\varepsilon)$ for some $\varepsilon>0$ and $\phi\in\calO_F[[X_1,\ldots, X_n]]$. 

\begin{lemma}\label{factoroutpi}
For any power series $\phi \in\calO_F[[X_1,\ldots, X_n]]$ and constant $0\leq c<1$, there exists $\delta>0$ such that either:
\begin{enumerate}
\item the power series $\phi\in\pi\calO_F[[X_1,\ldots, X_n]]$ or
\item the projection to the last coordinate of the set
$$T_\delta:=\{(\zeta-1,\zeta^{a_{2}}-1,\ldots,\zeta^{a_{2}a_{3}\dots a_{n}}-1)\vert \zeta \in \mu_{p^\infty}, \vert a_i\vert_p<\delta\}\cap S_\phi(q^{-(1-c)})$$
for 
$a_i\in \IZ_p$ is finite.
\end{enumerate}
\end{lemma}

\begin{proof}
If $n=1$ the result follows from Lemma \ref{smallpoints}. 
Let now $n\geq 2$. By induction we prove the following stronger claim: for small enough $\delta>0$ the existence of a sequence $\{(\zeta_k-1,\zeta_k^{a_{2k}}-1,\ldots,\zeta_k^{a_{2k}a_{3k}\dots a_{nk}}-1)\}_{k\in\IN}\in T_\delta$ such that the set of orders of $\zeta_k^{a_{2k}a_{3k}\dots a_{nk}}$ is infinite and  such that
$$\vert\phi(\zeta_k-1,\zeta_k^{a_{2k}}-1,\ldots,\zeta_k^{a_{2k}a_{3k}\dots a_{nk}}-1)\vert_p\leq \vert \zeta_k^{a_{2k}a_{3k}\dots a_{nk}}-1\vert_p^{1/\delta}$$
implies $\phi\in\pi\calO_F[[X_1,\ldots, X_n]]$. Indeed, assume $\phi\not\in\pi\calO_F[[X_1,\ldots, X_n]]$. There is then a largest integer $M$ with $X_n^M$ dividing the reduction $\bar{\phi}\in\IF_q[[X_1,\ldots, X_n]]$ modulo $\pi$
and we choose a lift $\psi\in \calO_F[[X_1,\ldots,X_n]]$ with
$$\phi(X_1,\ldots,X_n)\equiv X_n^M \psi(X_1,\ldots, X_n)\mod \pi.$$
Thus, considering for any $\delta>0$ such a sequence in $T_\delta$, and observing that the valuation $v_p(\zeta_k^{a_{2k}a_{3k}\dots a_{nk}}-1)$ becomes arbitrarily small, after passing to a subsequence we may in addition to that arrange for the property
$$(\zeta_k-1,\zeta_k^{a_{2k}}-1,\ldots,\zeta_k^{a_{2k}a_{3k}\dots a_{nk}}-1)\in S_{\psi}(q^{-(1-c/2)}).$$
Moreover, there exists some power series $\theta\in\calO_F[[X_1,\ldots,X_n]]$ such that we may write
$$\psi(X_1,\ldots,X_n)=\psi(X_1,\ldots, X_{n-1},0)+X_n\cdot \theta(X_1,\ldots,X_n).$$ 
Therefore, after possibly passing to a subsequence, one may establish the inequality
$$\vert \psi(\zeta_k-1,\ldots,\zeta_k^{a_{2k}a_{3k}\dots a_{(n-1)k}}-1,0)\vert_p \leq \vert(\zeta_k^{a_{2k}a_{3k}\dots a_{nk}}-1)\vert_p.$$
To prove the claim, we show $\psi(X_1,\ldots, X_{n-1},0)\in \pi\calO_F[[X_1,\ldots, X_{n-1}]]$
is forced for $\delta>0$ small enough. 
It follows that $\psi \equiv X_n\theta \mod \pi$, and hence
$$\phi(X_1,\ldots,X_n)\equiv X_n^{M+1}\theta(X_1,\ldots,X_n) \mod \pi.$$
This then contradicts the maximality of $M$ and establishes the claim. \par 
When $n=2$, Lemma \ref{smallpoints} applied to $\psi(X_1,0)$ shows  $\psi(X_1,0)\in \pi\calO_F[[X_1]]$ provided $\delta>0$ is small enough. When $n>2$, we obtain from $\vert a_{nk}\vert_p<\delta$ the inequality
$$\vert \zeta_k^{a_{2k}a_{3k}\dots a_{nk}}-1\vert_p\leq \vert \zeta_k^{a_{2k}a_{3k}\dots a_{(n-1)k}}-1\vert_p^{1/\delta}$$ and by induction, for $\delta>0$ small enough, obtain $\psi(X_1,\ldots, X_{n-1},0)\in \pi\calO_F[[X_1,\ldots, X_{n-1}]]$.\par
We now fix $\delta>0$ small enough as in the claim and deduce the result. Suppose the projection to the last coordinate of $T_\delta$ is infinite. Since then the absolute values $\vert \zeta^{a_{2}a_{3}\dots a_{n}}-1\vert_p$ approach one, we may construct a sequence $\{(\zeta_k-1,\zeta_k^{a_{2k}}-1,\ldots,\zeta_k^{a_{2k}a_{3k}\dots a_{nk}}-1)\}_{k\in\IN}\in T_\delta$ with  infinite projection onto the last coordinate and $q^{-(1-c)}\leq\vert \zeta_k^{a_{2k}a_{3k}\dots a_{nk}}-1\vert_p^{1/\delta}$. By the claim it follows that $\phi\in\pi\calO_F[[X_1,\ldots, X_n]]$, as desired.
 
\end{proof}

We now prove the main results of this section.
\begin{proof}[Proof of Proposition \ref{mainresult}]
When $n=1$, assume that for any $\varepsilon$ the set $S_\phi(\varepsilon)$ is infinite. In particular, the valuations of elements in $S_\phi(\varepsilon)$ become arbitrarily small for any $\varepsilon>0$. It follows from Lemma $\ref{smallpoints}$ that $\phi$ must be divisible by any power of the uniformizer $\pi$, hence vanishes identically. All the claims therefore hold.\par
For $n>1$, we may without loss of generality assume $\phi$ has a unit coefficient. We claim that for $\varepsilon$ small enough, we may partition $S_\phi(\varepsilon)$ into finitely many sets $S_j(\varepsilon)$ with the property that some multiplicative change of variables makes the projection onto the last coordinate of $S_j(\varepsilon)$ finite. \par
To prove the claim, we may without loss of generality assume the coordinates of $S_\phi(\varepsilon)$ have decreasing orders, and therefore all elements in $S_\phi(\varepsilon)$ admit a parametrization by $\calS\times\IZ_p^{n-1}$ as $(\zeta-1,\zeta^{a_{2}}-1,\ldots,\zeta^{a_{2}a_{3}\dots a_{n}}-1)$ for $\zeta-1\in \calS$ and $a_i\in\IZ_p$. It follows from Lemmas \ref{nicesequence} and \ref{factoroutpi} that for any $A=(A_2,\ldots, A_n)\in \IZ_p^{n-1}$ and $\varepsilon$ small enough, there exists $\delta_A>0$ such that after a multiplicative change of variables mapping $A$ to $0$ the set 
$$T_{A,\delta_A}:=\{(\zeta-1,\zeta^{a_{2}}-1,\ldots,\zeta^{a_{2}a_{3}\dots a_{n}}-1)\vert a_i\in \mathbb{B}_{\IZ_p}(A_{i}, \delta_A)\}\cap S_\phi(\varepsilon)$$
has finite projection to the last coordinate. By compactness of $\IZ_p^{n-1}$ the cover $\IZ_p^{n-1}=\bigcup_{A\in \IZ_p^{n-1}}\mathbb{B}_{\IZ_p}^{n-1}(A,\delta_A)$ admits a finite subcover. It follows that there is a finite number $f$ of  sets of the form $T_{A_j,\delta_{A_j}}$ for $1\leq j\leq f$ covering
$S_\phi(\varepsilon)$ and we may set $S_j(\varepsilon):=T_{A_j,\delta_{A_j}}$, thus proving the claim. 
\par
Therefore, it suffices to prove the proposition replacing $S_\phi(\varepsilon)$ by $S_j(\varepsilon)$ and after twisting and using the claim we may further reduce to when the last coordinate of $S_j(\varepsilon)$ is a singleton $\xi_n-1\in \calS$ and consider the $n-1$ variable power series $\phi(X_1,\ldots, X_{n-1},\xi_n-1)$. If $\phi(X_1,\ldots, X_{n-1},\xi_n-1)$ vanishes identically, then $\phi$ cuts out a special hyperplane and we are done. 
If $\phi(X_1,\ldots, X_{n-1},\xi_n-1)$ is non-zero, we iterate this procedure until we reduce to the $n=1$ case above. \par
We arrive at the conclusion that, for $\varepsilon$ small enough, there exists a finite cover $S_j(\varepsilon)=\cup_{k=1}^K S_{jk}$, where each $S_{jk}$ is a twist by an automorphism of $\widehat{\IG}_m^n$ of the set $\calS^{r_k-1}\times \{\xi_{r_kk}-1\}\times\cdots \times\{\xi_{nk}-1\}$ for some $1\leq r_k\leq n$ and fixed roots of unity $\xi_{ik}$. Moreover, the function $\phi$ vanishes identically along each $S_{jk}$ and it is easy to see that the Zariski closure of the sets $S_{jk}$ are codimension $n-(r_k-1)$ irreducible special subschemes. 
\end{proof}

In this way one deduces explicit rigidity results for power series, for instance when $n=2$ we arrive at the following formulation:
\begin{proposition}\label{twovariables}
 Let $\phi\in\mathcal{O}_F[[X,Y]]$ be an irreducible power series passing through the origin. If $\phi(\zeta-1,\xi-1)=0$
  for infinitely many pairs of $p$-power roots of unity $(\zeta, \xi)$, then $\phi=(X+1)^m-(Y+1)$
  for some $m\in \mathbb{Z}_p$, up to units and possibly switching the roles of $X$ and $Y$.
 \end{proposition}
\begin{proof}
Assume $\phi$ vanishes at infinitely many points in $\calS^2$. By compactness and Lemma \ref{nicesequence} we may after a multiplicative change of variables assume $\phi$ vanishes at a sequence $(\zeta_k-1, \zeta^{a_k}_k-1)\in \calS^2$ for $a_k\to 0$. By Lemma $\ref{smallpoints}$, and unwinding the change of variables, there are fixed $m\in \IZ_p$ and $\xi\in\mu_{p^\infty}$ such that $\phi(X, (Y+1)(X+1)^m-1)$ vanishes at $(\zeta_k-1,\xi-1)$ for an infinite sequence $\{\zeta_k\}_{k\in\IN}$, after possibly switching the roles of $X$ and $Y$.
We deduce that $\phi(\zeta_k-1, \xi\zeta_k^m-1)=0$ for an infinite sequence $\{\zeta_k\}_{k\in\IN}$.
Over $\calO_{F[\xi]}$ we may then write
$$\phi(X,Y)=\phi(X, \xi(X+1)^m-1)+(\xi(X+1)^m-(Y+1))G(X,Y)$$
for some power series $G(X,Y)$. Since $H(X):=\phi(X, \xi(X+1)^m-1)$ vanishes at infinitely many points $\{\zeta_k-1\}_{k\in\IN}$ it follows that $H=0$ and therefore
\begin{equation}\label{equation:factor}
\phi(X,Y)=(\xi(X+1)^m-(Y+1))G(X,Y)
\end{equation}
for some $G(X,Y)\in \calO_{F[\xi]}[[X,Y]]$. 
Taking conjugates under the group $\Gal(F[\xi]/F)$ of order $g$ in \eqref{equation:factor}, it follows that 
\begin{equation}\label{equation:norms}
\phi(X,Y)^g=\prod_{\sigma\in\Gal(F[\xi]/F)}((X+1)^m-\sigma(\xi)(Y+1))\prod_{\sigma\in\Gal(F[\xi]/F)}G^\sigma(X,Y)
\end{equation}
where $G^\sigma(X,Y)$ is obtained from $G$ by acting on the coefficients. The two factors on the right hand side of \eqref{equation:norms} have coefficients in $\calO_F$. Since $\phi$ is irreducible, it must be an irreducible factor of  $\prod_{\sigma}((X+1)^m-\sigma(\xi)(Y+1))$. But requiring $\phi(0,0)=0$ forces $\xi=g=1$ which shows that  $\phi(X,Y)$ is a unit multiple of $(X+1)^m-(Y+1)$, as desired.

\end{proof}

\subsection{Geometric statements}
We are interested in irreducible components $Z$ of the Zariski closure of the set of special points $\calS_A$ on $A=\calO_F[[X_1,\ldots, X_n]]/I$ for $I$ a non-trivial ideal. As irreducible special subschemes correspond to translates of a formal subtorus by a torsion point of $\widehat{\IG}_m^n$, their dimension should be the dimension of the subtorus. While one has to be careful with dimensions in our setting as the ring of coefficients is one-dimensional, it follows from the definitions of $Z$ that $(\pi)\subset\calO_F$ is never an associated prime and we show:

\begin{theorem} \label{geometricMMM}
Let $Z$ be an irreducible component of the Zariski closure of the special points on $\Spec(\calO_F[[X_1,\ldots,X_n]]/I)$. Then $Z$ is a special multiplicative subscheme. Moreover its codimension is the maximum number $r$ of columns with finite projections in the $\Aut(\widehat{\IG}_m^n)\cong \GL_n(\IZ_p)$-orbit of the special points on $Z$.
\end{theorem}
\begin{proof}
It follows from Theorem \ref{strongversion} that the special points on $Z$ are covered by finitely many irreducible special subschemes $S_1,\ldots S_K$ contained in $Z$. Moreover, since the special points are Zariski dense in $Z$,  we must have
$$Z=\cup_{k=1}^KS_k.$$
By irreducibility, we conclude that $K=1$ and $Z$ is an irreducible special subscheme. The statement on codimensions follows from the definitions, since after twisting by an automorphism $Z$ is a codimension $r$ hyperplane.
 \end{proof}

In particular, we note that:
\begin{corollary}
With notations as above, if the Zariski closure of $\calS_A$ is $d$-dimensional, then $\Spec(A)$ contains a $d$-dimensional special subscheme. 
\end{corollary}
We also record the rigidity result for formal schemes:
\begin{corollary}\label{cor:formal}
If the closed formal subscheme $\Spf(A)\subset \widehat{\IG}_m^n$ contains infinitely many $\overline{\IQ}_p$-torsion points of $\widehat{\IG}_m^n$, it contains a translate of a formal subtorus by a torsion point. Moreover, if $\Spf(A)$ is geometrically irreducible and the set of special points $\calS_A$ are Zariski dense, $\Spf(A)$ is precisely the translate of a formal subtorus by a torsion point.
 \end{corollary}
\begin{proof}
The formal schemes associated to special subschemes are torsion translates of formal subtori, so the result follows from Theorem \ref{geometricMMM}. 
\end{proof}


\section{Going beyond formal $\IG_m$}\label{LT}
At this point, the reader may wonder whether similar results hold for a larger class of formal groups than the multiplicative group. In general, we may consider an $n$-dimensional formal Lie group $\calF$ over a complete Noetherian local ring $R$ with residue field of characteristic $p$ and the corresponding connected $p$-divisible group $\calF[p^\infty]$. The special points are then points in $\calF[p^\infty](\overline{K})$, where $K$ is the field of fractions of $R$. One may ask: 
\begin{question}
Is there a class of special subschemes of the formal group $\calF$ such that the analogue of Theorem \ref{strongversion} holds?
\end{question}
\par

We proceed to show that essentially all the results of Section \ref{sec:gmcase} hold replacing $\widehat{\IG}_m$ by a one-dimensional Lubin-Tate formal group $\calF_{LT}$. The case of replacing $\widehat{\IG}_m^n$ by a more general $n$-dimensional formal group like the formal Lie group of an abelian scheme will be addressed in future work. 

\subsection{Products of one-dimensional Lubin-Tate formal groups}

Let $E/\IQ_p$ be a subfield of $F$ with ring of integers $\calO_E$, uniformizer $\pi_E$ and residue field $\IF_{q'}$. Given a power series $f\in \calO_E[[X]]$ with
$$f(X)\equiv \pi X \mod X^2 \text{ and }f(X)\equiv X^{q'}  \mod \pi_E,$$
Lubin and Tate show \cite[Lemma 1]{MR0172878} that for all $a\in \calO_E$ there is a unique power series $[a](X)\in\calO_E[[X]]$ with $[a](X)\equiv aX \mod X^2$ and $f([a](X))=[a](f(X))$. They construct \cite[Theorem 1]{MR0172878} a commutative one-dimensional formal group law $L(X,Y)\in \calO_E[[X,Y]]$ which satisfies
\begin{enumerate}
\item $L([a](X),[a](Y))=[a](L(X,Y))$
\item $L([a](X),[b](X))=[a+b](X)$
\item $[a]([b](X))=[ab](X)$
\item $[\pi_E](X)=f(X)$ and $[1](X)=X$
\end{enumerate}
for all $a,b \in \calO_E$. Up to isomorphism, the group law is independent of the choice of $f\in \calO_E[[X]]$ with the desired properties. We denote by $\calF_{LT}$ the formal group over $\calO_E$ resulting from this construction. In particular, the properties listed above show there is an injective ring homomorphism
$$\calO_E\hookrightarrow \End(\calF_{LT}),$$ 
and the $\overline{\IQ}_p$-torsion points of $\calF_{LT}$ form a divisible $\calO_E$-module. We will abuse notations and write $\calF_{LT}[\pi_E^\infty]$ for the $\overline{\IQ}_p$-points. It follows from the Newton polygon of $[\pi_E](X)$ and the congruence $[\pi_E](X)\equiv X^{q'} \mod \pi_E$ that a torsion point $\zeta\in\calF_{LT}[\pi_E^\infty]$ of exact order $\pi_E^k$ has normalized valuation 
$v_p(\zeta)=1/(q'^k-q'^{k-1})$.  Adjoining torsion points gives a totally ramified abelian extension $E(\calF_{LT}[\pi_E^\infty])$ of $E$. 
As these properties suggest, we may take as \emph{special points} the points in $\calF_{LT}^n[\pi_E^\infty]$ and define as before:
\begin{definition} 
\leavevmode
\begin{enumerate}
\item
An \emph{$\calF_{LT}$-multiplicative} change of variables on $\calO_F[[X_1,\ldots, X_n]]$ is a series of transformations using the formal group law of $\calF_{LT}$:
$$X_i\mapsto L(\cdots L(L(X_i, [B_{i(i-1)}](X_{i-1})), [B_{i(i-2)}](X_{i-2})),\ldots, [B_{i1}](X_{1}))$$
 where $1\leq i\leq n $ and $ B_{ij}\in \calO_E$ for $1\leq j\leq i$,  composed with possibly swapping variables $X_i\leftrightarrow X_j$.
 \item An \emph{$\calF_{LT}$-special subscheme }of $\Spec(\calO_F[[X_1,\ldots, X_n]])$ is a closed subscheme that after an $\calF_{LT}$-multiplicative change of variables becomes a finite union of intersections of hyperplanes $X_i=\zeta_i$ for $\zeta_i\in \calF_{LT}[\pi_E^\infty]$.
 \end{enumerate}
\end{definition}

\begin{remark}
The \emph{$\calF_{LT}$-multiplicative} changes of variables correspond to automorphisms of the Lubin-Tate formal group law. An irreducible scheme $\Spec(\calO_F[[X_1,\ldots, X_n]]/I)$ is $\calF_{LT}$-special if and only if the formal scheme $\Spf(\calO_F[[X_1,\ldots, X_n]]/I)$ is the translate of a formal subtorus $\calF_{LT}^d$ by a torsion point of $\calF_{LT}^n$. When $\calF_{LT}=\widehat{\IG}_m$, the formal group law is given by 
$$L(X,Y)=(X+1)(Y+1)-1$$
and we recover all the previous definitions as a special case. 
\end{remark}

We revisit the proofs of the key Lemmas \ref{nicesequence} and \ref{factoroutpi} in this more general setting. 
Fix an ideal $I\subset \calO_F[[X_1,\ldots, X_n]]$ and let $A=\calO_F[[X_1,\ldots, X_n]]/I$. For any $\varepsilon>0$ we consider as before the special points almost on $\Spec(A)$:
$$S_I(\varepsilon)=\{(\zeta_1,\ldots,\zeta_n)\in \calF_{LT}^n[\pi_E^\infty]\text{ such that }\forall \phi\in I \text{ } \vert \phi(\zeta_1,\ldots,\zeta_n)\vert_p<\varepsilon\}. $$

As we did for $\widehat{\IG}_m$, we shall use the fact that after possibly swapping coordinates any special point may be written as $(\zeta, [a_{2}](\zeta),\ldots, [a_{2}\cdots a_{n}](\zeta))$ for $a_i\in \calO_E$, together with a compactness argument. This parametrization is still possible since the endomorphism ring of Lubin-Tate formal groups acts transitively on $\pi$-power torsion of lower order for any $\zeta\in \calF_{LT}[\pi_E^\infty]$. Using the properties of the endomorphism ring of $\calF_{LT}^n$ we again obtain:

\begin{lemma}\label{LTnicesequence}
For any $\varepsilon,\delta >0$ and any infinite sequence in $S_I(\varepsilon)$ given by
$$\{(\zeta_k,[a_{2k}](\zeta_k),\ldots, [a_{2k}\cdots a_{nk}](\zeta_k))\vert \zeta_k\in  \calF_{LT}[\pi_E^\infty]\}_{k\in\IN}$$
for $\{a_{ik}\}_{k\in\IN}\in \mathbb{B}_{\calO_E}(A_i, \delta)$ in the $p$-adic ball centered at $A_i\in\calO_E$,
we may after an $\calF_{LT}$-multiplicative change of variables arrange for $a_{ik}\in \mathbb{B}_{\calO_E}(0, \delta)$. 
\end{lemma}

\begin{proof}
As before, choose $B_{ij}\in \calO_E$ such that the coefficients of the inverse change of variables, denoted $B_{ij}^*\in \calO_E$, satisfy the matrix identity
\begin{small}
$$\begin{pmatrix}
   1 & 0 & \dots  &0 \\
    B_{21}^* & 1 & \dots  & 0 \\
        \vdots & \vdots &  \ddots & \vdots \\
    B_{n1}^* &B_{n2}^* & \dots  & 1
    \end{pmatrix}
    \cdot
    \begin{pmatrix}
   1\\
   A_2\\
    \vdots \\
    A_2\cdots A_{n}
    \end{pmatrix}
    =
    \begin{pmatrix}
  1\\
   0\\
    \vdots \\
    0    \end{pmatrix}.$$
    \end{small}
Using crucially that the map $\calO_E\to \End (\calF_{LT})$ is a ring homomorphism, we see that the $\calF_{LT}$-multiplicative change of variables corresponding to the coefficients $B_{ij}$ acts on torsion points via:
    \begin{small}
    \begin{align*}
    \begin{pmatrix}
   \zeta\\
   [a_2](\zeta)\\
    \vdots \\
    [a_2\cdots a_{n}](\zeta)
    \end{pmatrix}
    &\mapsto
    \begin{pmatrix}
   \zeta\\
   L([B_{21}^*](\zeta), [a_2](\zeta))\\
    \vdots \\
    L([B_{n1}^*](\zeta),L(\cdots L([B_{n(n-1)}^*][a_2\cdots a_{n-1}](\zeta), [a_2\cdots a_{n}](\zeta))\cdots)
    \end{pmatrix}
    \\
    &=
   \begin{pmatrix}
   \zeta\\
   [B_{21}^*+a_2](\zeta)\\
    \vdots \\
 [B_{n1}^*+\cdots+B_{n(n-1)}^*a_2\cdots a_{n-1}+a_2\cdots a_{n}](\zeta)
    \end{pmatrix}
    \\
  \end{align*}
  \end{small}
so that after changing variables we may assume the products $a_{2k}\cdots a_{jk}\in \calO_E$ land in $\mathbb{B}(0,\delta)$ for any $2\leq j\leq n$. Now repeat this process as in the proof of Lemma \ref{nicesequence} to conclude.
\end{proof}

\begin{lemma}\label{LTfactoroutpi}
For any power series $\phi \in\calO_F[[X_1,\ldots, X_n]]$ and constant $0\leq c<1$, there exists $\delta>0$ such that either:
\begin{enumerate}
\item the power series $\phi\in\pi\calO_F[[X_1,\ldots, X_n]]$ or
\item the projection to the last coordinate of the set
$$T_\delta:=\{(\zeta,[a_{2}](\zeta),\ldots, [a_{2}\cdots a_{n}](\zeta))\vert \zeta \in \calF_{LT}[\pi_E^\infty], \vert a_i\vert_p<\delta\}\cap S_\phi(q^{-(1-c)})$$ 
for $a_i\in\calO_E$ is finite.

\end{enumerate}
\end{lemma}
\begin{proof}

If $n=1$ the result again follows from Lemma \ref{smallpoints}. 
Let now $n\geq 2$. By induction as in Lemma \ref{factoroutpi} we prove the following claim: for small enough $\delta>0$ the existence of a sequence $\{(\zeta_k,[a_{2k}](\zeta_k),\ldots, [a_{2k}\cdots a_{nk}](\zeta_k))\}_{k\in\IN}$ in $T_\delta$ such that the set of orders of $[a_{2k}\cdots a_{nk}](\zeta_k)$ is infinite and such that
$$\vert\phi(\zeta_k,[a_{2k}](\zeta_k),\ldots, [a_{2k}\cdots a_{nk}](\zeta_k))\vert_p\leq \vert [a_{2k}\cdots a_{nk}](\zeta_k)\vert_p^{\frac{[E:\IQ_p]}{\delta}}$$
implies $\phi\in\pi\calO_F[[X_1,\ldots, X_n]]$. Indeed, assume $\phi\not\in\pi\calO_F[[X_1,\ldots, X_n]]$. As before there is then a largest integer $M$ with $X_n^M$ dividing the reduction $\bar{\phi}\in\IF_q[[X_1,\ldots, X_n]]$ modulo $\pi$
and a lift $\psi\in \calO_F[[X_1,\ldots,X_n]]$ satisfying
$$\phi(X_1,\ldots,X_n)\equiv X_n^M \psi(X_1,\ldots, X_n)\mod \pi.$$
Thus, considering for $\delta>0$ such a sequence in $T_\delta$, for which the valuations $v_p([a_{2k}\cdots a_{nk}](\zeta_k))$ as before become arbitrarily small, after passing to a subsequence we may obtain that $(\zeta_k,[a_{2k}](\zeta_k),\ldots, [a_{2k}\cdots a_{nk}](\zeta_k))\in S_{\psi}(q^{-(1-c/2)})$.
Moreover, there exists some power series $\theta\in\calO_F[[X_1,\ldots,X_n]]$ such that
$$\psi(X_1,\ldots,X_n)=\psi(X_1,\ldots, X_{n-1},0)+X_n\cdot \theta(X_1,\ldots,X_n).$$ 
Therefore, after possibly passing to a subsequence, one may establish the inequality
$$\vert \psi(\zeta_k,[a_{2k}](\zeta_k),\ldots, [a_{2k}\cdots a_{(n-1)k}](\zeta_k),0)\vert_p \leq \vert[a_{2k}\cdots a_{nk}](\zeta_k)\vert_p.$$
To prove the claim, it again suffices to show $\psi(X_1,\ldots, X_{n-1},0)\in \pi\calO_F[[X_1,\ldots, X_{n-1}]]$
for $\delta$ small enough. \par
When $n=2$, Lemma \ref{smallpoints} applied to $\psi(X_1,0)$ shows  $\psi(X_1,0)\in \pi\calO_F[[X_1]]$ provided $\delta>0$ is small enough as before. When $n>2$, we may now consider the inequality $\vert[a_{2k}\cdots a_{nk}](\zeta_k)\vert_p\leq \vert [a_{2k}\cdots a_{(n-1)k}](\zeta_k)\vert_p^{\frac{[E:\IQ_p]}{\delta}}$ and conclude $\psi(X_1,\ldots, X_{n-1},0)\in \pi\calO_F[[X_1,\ldots, X_{n-1}]]$ by induction. This proves the claim. Since the absolute values $\vert [a_{2}\cdots a_{n}](\zeta)\vert_p$ approach one provided the last coordinate of $T_\delta$ is infinite, the proof follows from the claim as in Lemma \ref{factoroutpi}.
\end{proof}

The main results are now deduced in exactly the same way as in Section \ref{sec:gmcase} for $\calF_{LT}=\widehat{\IG}_m$, replacing the parametrization of torsion points by $\IZ_p^{n-1}$ with one by the compact group $\calO_E^{n-1}$. 
Below are the adapted statements of the geometric formulations. The proof is left to the reader.

\begin{theorem}\label{LTgeometricMMM}
Let $A=\calO_F[[X_1,\ldots, X_n]]/I$. A component of the Zariski closure of the $\calF_{LT}$-special points on $\Spec(A)$ is an $\calF_{LT}$-special subscheme.
\end{theorem}

\begin{theorem}\label{LTstrongversion}
 Let $A=\calO_F[[X_1,\ldots, X_n]]/I$. The formal scheme $\Spf (A)$ contains a finite union $\calT$ of translates of formal subtori $\calF_{LT}^k$ by a torsion point of $\calF_{LT}^n$ and there exists a constant $C_I>0$ depending on $I$ and the choice of $p$-adic absolute value $\vert-\vert_p$ such that for any $\phi\in I$, 
$$\vert\phi(\zeta_1,\ldots, \zeta_n)\vert_p> C_I $$
provided $(\zeta_1,\ldots, \zeta_n)\in \calF_{LT}[\pi_E^\infty]^n\setminus (\calT(\overline{\IQ}_p)\cap\calF_{LT}[\pi_E^\infty]^n)$.
\end{theorem}


\nocite{}
\bibliography{MMMbiblio.bib}

\end{document}